\theoremstyle{plain}
\numberwithin{equation}{section}
\newtheorem {lemma} {Lemma}
\newtheorem {theorem} {Theorem}
\newtheorem{proposition}{Proposition}
\theoremstyle{remark}
\begin{document}

\title{The central limit theorem for extremal characters of the infinite symmetric group}

\author {Alexey Bufetov }
\date{}
\maketitle

\begin{abstract}
The asymptotics of the first rows and columns of random Young diagrams corresponding to
extremal characters of the infinite symmetric group is studied. We consider rows and columns
with linear growth in $n$, the number of boxes of random diagrams,
and prove the central limit theorem for them in the case
of distinct Thoma parameters. We also establish a more precise statement relating the growth
of rows and columns of Young diagrams to a simple independent random sampling model.
\end{abstract}

\section{Introduction}

Let $\mathbb Y_n$ be the set of all Young diagrams with $n$ boxes. By $\mathbb Y$ denote the
$\mathbb Z_{\geq 0}$-graded graph, whose $n$-th level is $\mathbb Y_n$, and whose
edges join Young diagrams $\lambda \in \mathbb Y_n$ and $\mu \in \mathbb Y_{n+1}$ if
they differ by exactly one box (then we write $\lambda \uparrow \mu$). Given a diagram
$\lambda$, let $\dim \lambda$
be the number of distinct shortest paths from the one-box diagram to $\lambda$.

A sequence $ \{ M_n \}_{n=1}^{\infty}$
of probability measures on the sets $\mathbb Y_n$ is called a {\it coherent system of distributions} if

\begin{equation*}
M_n(\mu)= \sum_{\lambda : \mu \uparrow \lambda} \frac{\dim \mu}{\dim \lambda} M_{n+1} (\lambda), \ \mbox{ for any } \ \mu \in \mathbb Y_n.
\end{equation*}

It is well-known that the characters of the infinite symmetric group are in one-to-one correspondence
with the coherent systems of distributions on $\mathbb Y$. By Thoma's theorem (see \cite{Th})
the extremal characters can be parameterized by the elements of the set
$\mathcal P = (\{ \alpha_i \}, \{ \beta_j \}, \gamma)$,
where $\alpha_i, \beta_j, \gamma \in \mathbb R$ satisfy
\begin{align*}
& \alpha_1 \ge \alpha_2 \ge \alpha_3 \ge \ldots \ge 0,
\ \ \beta_1 \ge \beta_2 \ge \ldots \ge 0, \\
& \gamma \ge 0, \ \
\sum_{i=1}^{\infty} (\alpha_i + \beta_i) + \gamma =1.
\end{align*}

Let $\{ M_n^{\mathcal P} \}$ be the coherent system of distributions corresponding
to a fixed collection of parameters $\mathcal P$. By $\lambda_i^{\mathcal P} (n)$
(resp.$\lambda_j^{' \mathcal P} (n)$ ) denote
the length of $i$-th row (resp. $j$-th column) of the random Young diagram
distributed according to $M_n^{\mathcal P}$. Our goal is to study the asymptotic behaviour
of these random variables.

It is known (see \cite{KV},\cite{KOO},\cite{KOV}) that $\lambda_i^{\mathcal P} (n)$,
$\lambda_j^{' \mathcal P} (n)$ satisfy the law of large numbers:

\begin{equation*}
\frac{\lambda_i^{\mathcal P} (n)}{n} \xrightarrow[prob]{} \alpha_i, \ \ \
\frac{\lambda_j^{' \mathcal P} (n)}{n} \xrightarrow[prob]{} \beta_j.
\end{equation*}

The central limit theorem was first established by F\'eray and M\'eliot \cite{FM}
for the case $\alpha_i=(1-q) q^{i-1}$, $\beta_j=0$, $\gamma=0$.
In the present paper we prove the central limit theorem
for the case of strictly monotone sequences of parameters.
In the recent paper by M\'eliot \cite{M} this theorem was also proved by a different method.

Fix a set of parameters
$\mathcal P$ and assume that
\begin{equation}
\label{monot}
\alpha_i > \alpha_{i+1}, \ \mbox{ for all } i : \alpha_i \ne 0; \ \ \ \
\beta_j > \beta_{j+1}, \ \mbox{ for all } j : \beta_j \ne 0.
\end{equation}

\begin{theorem}[\bf Central limit theorem]
Let $\mathcal P$ be any set of parameters satisfying \eqref{monot} and let $K,L>0$
be integers such that $\alpha_1 > \alpha_2 > \dots > \alpha_K >0$,
$\beta_1 > \beta_2 > \dots > \beta_L >0$. Then
\begin{multline*}
\Bigl(\frac{ \lambda_1^{\mathcal P} (n) - \alpha_1 n}{ \sqrt{n}}, \frac{ \lambda_2^{\mathcal P} (n) - \alpha_2 n}{ \sqrt{n}}, \ldots,
\frac{ \lambda_K^{\mathcal P} (n) - \alpha_K n}{ \sqrt{n}}, \frac{ \lambda^{' \mathcal P}_1 (n) - \beta_1 n}{ \sqrt{n}}, \\ \ldots,
\frac{ \lambda^{' \mathcal P}_L (n) - \beta_L n}{ \sqrt{n}} \Bigr) \xrightarrow[Law]{} Z=(Z_1, Z_2, \ldots, Z_k, Z_1^{'}, \ldots, Z_L^{'}),
\end{multline*}
where $Z$ is a multidimensional Gaussian random variable with moments:
\begin{align*}
& \mathbf E Z_i =0, \ \ \ \mathbf E Z'_i =0,  \\
& \mathbf E Z^2_i =\alpha_i - \alpha_i^2, \ \ \ \mathbf E Z'^2_i = \beta_i - \beta_i^2,  \\
& \mathbf E Z_i Z_j =-\alpha_i \alpha_j, \ \ \  \mathbf E Z'_i Z'_j = - \beta_i \beta_j, \ \ \ \mathbf E Z_i Z'_j = -\alpha_i \beta_j.
\end{align*}
\end{theorem}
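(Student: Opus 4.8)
The plan is to reduce the theorem to a multidimensional central limit theorem for a multinomial vector, by realizing the measures $M_n^{\mathcal P}$ through an explicit i.i.d. sampling model. For extremal characters the weight of a diagram factors as $M_n^{\mathcal P}(\lambda)=\dim\lambda\cdot S_\lambda(\mathcal P)$, where $S_\lambda$ is the (super/hook) Schur function evaluated at the Thoma parameters. This combinatorial identity is exactly the Robinson--Schensted--Knuth image of the following procedure: draw i.i.d. letters $X_1,\dots,X_n$ from the probability measure on a totally ordered alphabet that assigns mass $\alpha_i$ to the $i$-th ``row letter'', mass $\beta_j$ to the $j$-th ``column letter'', and has a continuous part of total mass $\gamma$; then apply the version of RSK adapted to mixed (unprimed/primed) alphabets. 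The shape of the recording tableau is then distributed according to $M_n^{\mathcal P}$. This is the ``independent random sampling model'' of the abstract, and establishing it rigorously, including the continuous part, is the first step.

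The second step is to express the tracked row and column lengths in terms of this sampling. Let $N_i$ denote the number of occurrences among $X_1,\dots,X_n$ of the $i$-th row letter and $N'_j$ that of the $j$-th column letter. By Greene's theorem, $\lambda_1+\dots+\lambda_i$ equals the maximal total length of $i$ disjoint weakly increasing subsequences of the word, while the conjugate sums $\lambda'_1+\dots+\lambda'_j$ are governed by decreasing subsequences. Taking the $i$ most frequent row letters as $i$ disjoint (constant, hence weakly increasing) subsequences already yields $N_1+\dots+N_i$, and the analogous construction works for columns. The core claim is that these choices are optimal up to a negligible correction, so that
\begin{equation*}
\lambda_i^{\mathcal P}(n) = N_i + o_P(\sqrt n), \qquad \lambda_j'^{\mathcal P}(n) = N'_j + o_P(\sqrt n).
\end{equation*}

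The main obstacle is precisely this error estimate, and it is where the hypotheses \eqref{monot} enter. Three sources of overshoot must be controlled: contamination of a weakly increasing subsequence by other row letters, interaction between row- and column-type letters, and --- most delicately --- the continuous bulk of mass $\gamma$, whose own longest increasing subsequence is of order $\sqrt{\gamma n}$, the same scale as the fluctuations we study. The key point is that a row letter of positive density $\alpha_i>0$ occupies the whole interval $[1,n]$, so any attempt to enlarge its subsequence using continuous letters is confined to $O(1)$-length windows: optimizing $\alpha_i s + 2\sqrt{\gamma(n-s)}$ over the splitting position $s$ forces $n-s=O(1)$, giving only an $O_P(1)$ gain. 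Strict positivity $\alpha_i,\beta_j>0$ for $i\le K$, $j\le L$ thus keeps the continuous contribution to each tracked row and column bounded in probability, and the strict gaps $\alpha_i>\alpha_{i+1}$, $\beta_j>\beta_{j+1}$ prevent reshuffling between nearly-equal rows (a tie would replace raw counts by order statistics and destroy the Gaussian limit). Assembling these estimates gives the displayed approximation with an error that is $o_P(\sqrt n)$, and indeed bounded in probability.

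With the reduction in hand the conclusion is immediate. The vector $(N_1,\dots,N_K,N'_1,\dots,N'_L)$ consists of counts in a multinomial scheme with a residual category (the remaining letters together with the continuous part), so the classical multidimensional central limit theorem gives
\begin{equation*}
\frac{1}{\sqrt n}\bigl(N_1-\alpha_1 n,\dots,N_K-\alpha_K n,\ N'_1-\beta_1 n,\dots,N'_L-\beta_L n\bigr)\xrightarrow[Law]{} Z,
\end{equation*}
where the covariances $n\alpha_i(1-\alpha_i)$, $-n\alpha_i\alpha_j$ and $-n\alpha_i\beta_j$ of the multinomial counts are exactly those prescribed for $Z$. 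Combining this with the error estimate via Slutsky's theorem yields the theorem. A parallel route, computing the joint moments of the $\lambda_i$ directly through shifted-symmetric functions and checking that they converge to Gaussian moments, is also available, but the sampling argument makes the covariance structure transparent and is the approach I would pursue.
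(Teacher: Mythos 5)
Your overall architecture is exactly the paper's: realize $M_n^{\mathcal P}$ as the RSK image of $n$ i.i.d.\ letters (the Kerov--Vershik result, Proposition 2 of the paper), prove that $\lambda_i^{\mathcal P}(n)-N_{x_i}(n)$ and $\lambda_j^{'\mathcal P}(n)-N_{y_j}(n)$ are negligible on the $\sqrt n$ scale, and finish with the multinomial CLT and Slutsky. The covariance bookkeeping at the end is correct. The problem is that the middle step --- which is the entire content of the paper's Theorem~2 and of its Lemmas 1--5 --- is asserted rather than proved, and the sketch you give does not close it.

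Concretely: your variational bound $\alpha_i s + 2\sqrt{\gamma(n-s)}$ only rules out one restricted family of competitors (a constant block followed by an increasing block of continuous letters), whereas Greene's theorem requires bounding a maximum over \emph{all} unions of $k$ disjoint weakly increasing subsequences, which may interleave the letters $x_1,\dots,x_K$, the $y_j$'s and the continuous letters arbitrarily. More seriously, for the $l$-th row with $l\ge 2$ the letters entering that row are those bumped out of row $l-1$; they are no longer i.i.d., so no direct subsequence count of the original word controls $\lambda_l^{\mathcal P}(n)$ by itself. The paper resolves this with three devices for which you would need substitutes: (i) a reflected-random-walk domination (its Lemma 4) showing that when $\alpha_{k-1}>\alpha_k$ the number of ``wrong'' letters $x_k$ sitting in a higher row has bounded expectation --- this is where \eqref{monot} enters quantitatively, not merely as ``ties would destroy Gaussianity''; (ii) a combinatorial lemma (its Lemma 5) showing that the disorder statistic $\rho$ does not increase under bumping, which is what makes the row-by-row induction go through; and (iii) an amalgamation/monotonicity argument (its Lemmas 1 and 2) that replaces the continuous part and the infinite tails of $(\alpha_i)$, $(\beta_j)$ by finitely many distinct letters while only increasing $\sum_{i\le k}\lambda_i^{\mathcal P}(n)$, which is how the $\sqrt{\gamma n}$-sized increasing subsequence of the continuous bulk is actually tamed. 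Until arguments of this kind are supplied, the claim $\lambda_i^{\mathcal P}(n)=N_{x_i}(n)+O_P(1)$ is a restatement of what must be proved.
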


For the case $\alpha_1 = \dots = \alpha_k = \frac{1}{k}$ the fluctuations are not Gaussian (see \cite[Ch.3.3 Th.2] {Ker},
\cite[Th.1.6]{Joh}),
so the assumption \eqref{monot} is essential.

{\it Remark 1}. For all $\alpha_i \ne 0$, $\beta_j \ne 0$ let  $\{ X_i \}$, $\{Y_j \}$,$\Theta$
be independent Gaussian random variables with zero mean and dispersions:

\begin{equation*}
\mathbf{E} X_i^2 = \alpha_i, \ \ \mathbf{E} Y_j^2=\beta_j, \ \ \ \mathbf{E} \Theta^2 = \gamma.
\end{equation*}
Then the distribution $(Z_1, \dots, Z_K, Z_1^{'}, \dots, Z_L{'})$ coincides with
the projection to the first $K+L$ coordinates of the
conditional distribution on the
hyperplane

\begin{equation*}
X_1+ \dots + X_K+X_{K+1}+ \dots +Y_1+\dots + Y_L+ Y_{L+1}+ \dots + \Theta =0.
\end{equation*}

{\it Remark 2}. Let $\widetilde M_{\nu}^{\mathcal P}$ stand for
the measure on $\mathbb Y$ defined by the formula

\begin{equation*}
\widetilde M_{\nu}^{\mathcal P} (\lambda) :=
e^{-\nu} \frac{\nu^{|\lambda|}}{|\lambda|!} M_{|\lambda|}^{\mathcal P} (\lambda),
\end{equation*}
where $|\lambda|$ is the number of boxes in $\lambda$.
The measure $\widetilde M_{\nu}^{\mathcal P}$ is called the {\it poissonization} of measures $\{M_n^{\mathcal P} \}$
with parameter $\nu>0$.
Let $\tilde \lambda_i^{\mathcal P} (\nu)$(resp. $\tilde \lambda_j^{' \mathcal P} (\nu)$) be the length
of $i$-th row (resp. $j$-th column) of the random Young diagram distributed according to the measure
$\widetilde M_{\nu}^{\mathcal P}$.
Under the assumptions of Theorem 1 we have
\begin{multline*}
\Bigl(\frac{\tilde \lambda_1^{\mathcal P} (\nu) - \alpha_1 \nu}{ \sqrt{\nu}}, \frac{\tilde \lambda_2^{\mathcal P} (\nu) - \alpha_2 \nu}{ \sqrt{\nu}}, \ldots,
\frac{\tilde \lambda_K^{\mathcal P} (\nu) - \alpha_K \nu}{ \sqrt{\nu}}, \frac{\tilde \lambda^{' \mathcal P}_1 (\nu) - \beta_1 \nu}{ \sqrt{\nu}}, \\ \ldots,
\frac{ \tilde \lambda^{' \mathcal P}_L (\nu) - \beta_L \nu}{ \sqrt{\nu}} \Bigr) \xrightarrow[Law]{\nu \to \infty} (X_1, X_2, \ldots, X_K, Y_1, \ldots, Y_L).
\end{multline*}
The proof is similar to the proof of Theorem 1.

Let $\mathcal A=L_e \cup L_o \cup G$ be an alphabet, $L_e = \{ x_1, x_2, \ldots \}$ and
$L_o= \{ y_1, y_2, \ldots \}$ are discrete parts of $\mathcal A$ and let $G$ be
the continuous part which we will identify with an interval in $\mathbb R$.
Let $\mu_1$ be the probability measure on $\mathcal A$
such that $\mu_1(\{ x_i \})=\alpha_i$, $\mu_1(\{ y_j \})=\beta_j$, and the restriction of $\mu_1$ to $G$
is proportional to Lebesque measure on $G$ with total mass $\mu_1(G)=\gamma$. Let $\mu_n = \mu_1^{\otimes n}$ be
the product Bernoulli measure on $\mathcal A^n$. By $N_{x_i}(n)$ (resp. $N_{y_j}(n)$) we denote the
number of letters $x_i$ (resp. $y_j$) in the random word $w \in \mathcal A^n$ distributed according to the measure
$\mu_n$. Let $p$ be a linear order on $\mathcal A$. It was shown in \cite{vershik-kerov}
that a certain generalisation of the RSK-algorithm provides a map

\begin{equation*}
\phi_p : \mathcal A^n \to \mathbb Y_n
\end{equation*}
such that

\begin{equation*}
\phi_p(\mu_n) = M_n^{\mathcal P}.
\end{equation*}

Therefore $\lambda_i^{\mathcal P} (n), \lambda_j^{' \mathcal P} (n)$
can be defined as functions on the probability space $(\mathcal A^n, \mu_n)$.

\begin{theorem}
Let $\mathcal P$ be any set of parameters satisfying \eqref{monot} and
let $K,L >0$ be integers such that $\alpha_1 > \alpha_2 > \dots > \alpha_K >0$
and $\beta_1 > \beta_2 > \dots > \beta_L >0$. Let
\begin{align*}
& \epsilon_1 (n) := \lambda_1^{\mathcal P} (n) - N_{x_1} (n),  \\
& \epsilon_2 (n) := \lambda_2^{\mathcal P} (n) - N_{x_2} (n),  \\
& \vdots \\
& \epsilon_K (n) := \lambda_K^{\mathcal P} (n) - N_{x_K} (n),  \\
& \epsilon'_1 (n) := \lambda_1^{' \mathcal P} (n) - N_{y_1} (n),   \\
& \vdots \\
& \epsilon'_L (n) := \lambda_L^{' \mathcal P} (n) - N_{y_L} (n).
\end{align*}
Then there exists a constant $C=C(K,L)$ such that
\begin{equation*}
\mathbf E |\epsilon_i (n) | < C, \ \ \ \mathbf E |\epsilon'_j(n)| < C, \ \ \ i=1 \ldots K, \ \ \ j=1 \ldots L.
\end{equation*}
\end{theorem}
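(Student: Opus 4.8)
The plan is to work entirely on the probability space $(\mathcal A^n, \mu_n)$ supplied by the map $\phi_p$, so that the shape statistics $\lambda_i^{\mathcal P}(n), \lambda_j^{'\mathcal P}(n)$ and the counts $N_{x_i}(n), N_{y_j}(n)$ all become functions of the same random word $w$. The engine is the generalized Greene theorem for the Vershik--Kerov RSK on the mixed alphabet $\mathcal A$: the partial sum $\lambda_1 + \cdots + \lambda_k$ equals the maximal number of letters in a union of $k$ ``row-admissible'' subsequences of $w$ (weakly increasing for $p$, with $L_e$-letters allowed to repeat but $L_o$- and $G$-letters strictly increasing), while $\lambda'_1 + \cdots + \lambda'_l$ equals the maximal number of letters in a union of $l$ ``column-admissible'' subsequences. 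I would prove the statement for the partial sums $D_k := \sum_{i\le k}(\lambda_i - N_{x_i})$ and $D'_l := \sum_{j\le l}(\lambda'_j - N_{y_j})$ and recover the individual claims by telescoping: since $\lambda_i - N_{x_i} = D_i - D_{i-1}$, a uniform bound $0 \le \mathbf E D_k \le C_k$ gives $\mathbf E|\lambda_i - N_{x_i}| \le \mathbf E D_i + \mathbf E D_{i-1}$, and likewise for the columns.

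The lower bound $D_k \ge 0$ is immediate, since the runs of $x_1, \ldots, x_k$ are $k$ disjoint row-admissible subsequences of total length $N_{x_1} + \cdots + N_{x_k}$, so Greene's theorem forces $\lambda_1 + \cdots + \lambda_k \ge N_{x_1} + \cdots + N_{x_k}$. The content is the matching upper bound $\mathbf E D_k \le C_k$ uniform in $n$. First I would treat $k=1$, where $\lambda_1$ is a longest row-admissible subsequence. Running the left-to-right dynamic programming over the order $p$, the quantity ``longest admissible subsequence of $w_1\ldots w_t$ using only letters $\le a$'' obeys a Lindley-type recursion, so the excess of this front over the running count of $x_1$'s is a reflected random walk with increments $+1$ when a foreign letter is appended and $-1$ when the $x_1$-driven front advances. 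Because $\alpha_1$ strictly dominates every other discrete density, the walk has strictly negative drift and is a stable queue, so its stationary mean, hence $\mathbf E(\lambda_1 - N_{x_1})$, is bounded uniformly in $n$. The continuous part $G$ cannot spoil this: as $\mu_1|_G$ is non-atomic, a block of $G$-letters enlarges an admissible subsequence only through a genuine increasing run, of length $O(\sqrt u)$ in a window of $u$ positions, whereas sacrificing $u$ positions of $x_1$ costs about $\alpha_1 u$; maximizing $2\sqrt{\gamma u}-\alpha_1 u$ over $u$ shows the total gain from $G$ (and, by the same convexity together with the summability of the $\alpha_i$, from the infinite discrete tail) is $O(1)$.

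For $k \ge 2$ I would iterate this mechanism. The maximal $k$-union can be analyzed through the RSK growth process restricted to the first $k$ rows, giving a system of $k$ coupled Lindley recursions in which the service rate feeding the queue attached to row $m$ is the advance rate of the optimal $(m-1)$-union; by induction this rate equals $\alpha_1$ up to negligible fluctuations, and since $\alpha_1 > \alpha_m$ by strict monotonicity, every queue in the cascade is stable, so summing the uniformly bounded stationary means over $m \le k$ yields $\mathbf E D_k \le C_k$. The columns are handled identically after exchanging the roles of $L_e$ and $L_o$ and passing to the column-admissible (transpose) version of Greene's theorem, which makes the $y_j$ the dominant repeated letters and the $\beta_j$ the relevant densities; strict monotonicity of the $\beta_j$ gives $0\le \mathbf E D'_l \le C'_l$. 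Combining the partial-sum bounds through the telescoping identity and setting $C = C(K,L) := \max_{i\le K}(C_i + C_{i-1}) + \max_{j\le L}(C'_j + C'_{j-1})$ (with $C_0 = C'_0 = 0$) finishes the argument.

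I expect the main obstacle to be the rigorous control of the coupled queueing cascade for $k \ge 2$: one must show that the front of the optimal $(m-1)$-union advances at a rate arbitrarily close to $\alpha_1$, with fluctuations that do not accumulate across the $k$ stages, uniformly in $n$, while simultaneously keeping the contributions of $G$ and of the infinite discrete tail under a single $n$-independent constant. The strict inequalities in \eqref{monot} are precisely what make each queue stable; were two of the top densities equal, the associated queue would be critical and its mean would grow like $\sqrt n$, in agreement with the breakdown of the Gaussian regime noted after Theorem 1.
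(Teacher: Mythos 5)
Your overall architecture is sound and in fact overlaps substantially with the paper's: you work on the word space $(\mathcal A^n,\mu_n)$, use Proposition 1b (Greene-type identities) to get the lower bound $\sum_{i\le k}\lambda_i^{\mathcal P}(n)\ge\sum_{i\le k}N_{x_i}(n)$ from the $k$ disjoint constant runs, control the first row by a reflected random walk with negative drift (this is exactly the paper's Lemma 4), recover individual rows from partial sums by telescoping (the paper does the same in Section 3.2), and transpose for columns (Lemma 3). But the heart of the theorem --- the upper bound $\mathbf E\bigl(r_k(w)-\sum_{i\le k}N_{x_i}(n)\bigr)\le C_k$ for $k\ge 2$ --- is not actually proved in your proposal; you describe a ``cascade of coupled Lindley recursions'' and then yourself flag as the main obstacle precisely the point that makes or breaks the argument, namely that the fluctuations do not accumulate as letters are bumped from row to row. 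That is a genuine gap, not a technicality. The paper closes it with a combinatorial monotonicity statement (Lemma 5): for a pair of adjacent letters $a<b$ in $L_e$, the statistic $\rho(w_{a,b})$ (the maximal excess of $b$'s over $a$'s in a suffix, which is exactly the number of $b$'s that get stuck in a row) does not increase when one passes from the stream of letters entering a row to the stream bumped out of it. This is what lets the single-row random-walk bound be pushed down through rows $1,\dots,l-1$ without loss. Your sketch also misidentifies the relevant stability condition: what matters at each stage is the comparison of \emph{consecutive} parameters $\alpha_{k-1}>\alpha_k$ for the pair $(x_{k-1},x_k)$ arriving at a given row, not $\alpha_1>\alpha_m$; more importantly, no mechanism is given for why the ``service rate'' seen by row $m$ retains bounded fluctuations after $m-1$ stages of bumping.

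A secondary issue is your treatment of the continuous part $G$ and the infinite discrete tail via the optimization $\max_u\bigl(2\sqrt{\gamma u}-\alpha_1 u\bigr)$. As stated this bounds a deterministic surrogate, not the expectation of a supremum of the random quantity $\mathrm{LIS}$ minus a linear term over all windows and all rows; making it rigorous would require concentration estimates for longest increasing subsequences and a way to combine them across the $k$ rows. The paper sidesteps all of this with the amalgamation device (Lemma 2): merging an interval of letters into a single new $L_e$-letter can only increase $r_k(w)$, so the general upper bound reduces to the finite-alphabet, $\gamma=0$ case with all parameters still pairwise distinct. If you want to salvage your route, you need either a proof of the bumping-monotonicity lemma (or an equivalent control of the pair patterns $w_{a,b}$ under row insertion) for $k\ge2$, and either the amalgamation reduction or genuine LIS large-deviation inputs for the continuous and tail parts.
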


Theorem 1 can be easily derived from Theorem 2.

{\it Remark 3}.
There are several ways to define the random variables $\lambda_i^{\mathcal P} (n)$ on the probability
space $(\mathcal A^n, \mu_n)$ corresponding to different linear orders on $\mathcal A$.
It will be shown in section 2.2 that the distribution of $\lambda_i^{\mathcal P} (n) - N_{x_i}(n)$
does not depend on a specific choice of a linear order on $\mathcal A$.

{\it Remark 4}.
Let $\tilde \mu_{\nu} (w)$ be the measure on the set of all finite words (with letters from alphabet $\mathcal A$)
defined by the formula
\begin{equation*}
\tilde \mu_{\nu} (w) := e^{-\nu} \frac{\nu^{|w|}}{|w|!} \mu_{|w|} (w),
\end{equation*}
where $|w|$ is the number of letters in $w$.
Let $\tilde N_{x_i} (\nu)$ (resp. $\tilde N_{y_j} (\nu)$ ) be the number of letters $x_i$ (resp. $y_j$) in
the random word $w$ distributed according to the measure $\tilde \mu_{\nu}$.
Under the same assumptions the statement of Theorem 2 holds for the random variables
$\tilde \lambda_i^{\mathcal P} (\nu) - \tilde N_{x_i} (\nu)$,
$\tilde \lambda_j^{' \mathcal P} (\nu) - \tilde N_{y_j} (\nu)$.
The proof is similar to the proof of Theorem 2.

{\it Remark 5}.
Theorem 2 can be restated in terms that do not use the RSK-algorithm.
Then variables
$\lambda_i^{\mathcal P} (n), \lambda_j^{' \mathcal P}(n)$ should be
defined on the probability space of $\mathcal A_p$-tableaux (see
the definition in section 2.1).

{\bf Acknowledgments}. I am deeply grateful to G.Olshanski and A.Borodin for the
statement of the problem and for
numerous helpful discussions. I am grateful to L.Petrov for valuable remarks.
I was partially supported by Simons Foundation--IUM scholarship,
by Moebius Foundation scholarship, and by RFBR-CNRS Grant 10-01-93114.

\section {Main lemmas}
\subsection{}

In this section we recall some facts from \cite{vershik-kerov}.

Let $p$ be a linear order on $\mathcal A$. For $x,y \in \mathcal A$ we will write $x \nearrow y$
if $x<y$ or $x=y \in L_e$ and $x \searrow y$ if $x>y$ or $x=y \in L_o \cup G$. A word
$w=x_1 x_2 \dots x_n$ is called {\it increasing} if $x_1 \nearrow x_2 \dots \nearrow x_n$ and
{\it decreasing} if $x_1 \searrow x_2 \dots \searrow x_n$.

A Young diagram $\lambda$ filled by letters from $\mathcal A$ is called an {\it $\mathcal A_p$-tableau}
of shape $\lambda$ if the letters are increasing along the rows and decreasing along the columns
if we read them from bottom to top (see the example below).

For each $w \in \mathcal A^n$ the generalised RSK-algorithm produces a pair $(R(w),S(w))$, where
$R(w)$ is an $\mathcal A_p$-tableau, $S(w)$ is a standard\footnote{A standard Young
tableau is a diagram $\lambda$ filled by numbers from 1 to $|\lambda|$, each occuring once;
the numbers form an increasing sequence along each row and down each column.}
Young tableau, and $R(w)$, $S(w)$ have
the same shape $\lambda$. Let the map

\begin{equation*}
\phi_p \colon \mathcal A^n \to \mathbb Y_n
\end{equation*}
take each $w$ to the shape of $R(w)$ and $S(w)$.

We recall the definition of the generalised RSK-algorithm. At first, we define the algorithm of row bumping;
given an $\mathcal A_p$-tableau $T$ and a letter $x \in \mathcal A$, it produces a new $\mathcal A_p$-tableau,
denoted $x \to T$. This tableau will have one more box than $T$, and its entries will be those of $T$, together
with one more entry labelled $x$. Suppose $x \in L_e$. If $x$ is greater or equal to all the entries in the first
row of $T$, we add $x$ in a new box to the end of the first row. Otherwise we find the left-most entry in the
first row that is strictly greater than $x$, replace it with $x$ and bump the old entry.
If $x \in L_o$, then the rule is the same, but $x$ can bump not only larger entries, but equal entries also.
Take this entry that was bumped from the first row, and repeat the process for the second row.
Keep going until the bumped entry can be put at the end of the row it is bumped into, or until it
is bumped out of the bottom row, in which case it forms a new row with one entry.

For $w=x_1 x_2 \dots x_n$ define $R(w)$ by the formula

\begin{equation*}
R(w):=[( x_n \to (x_{n-1} \to (x_{n-2} \dots \to (x_2 \to (x_1 \to \emptyset )) \dots )].
\end{equation*}
On each step of the algorithm one new box joins $R(w)$. Let us put number $i$ to
the box from the $i$-th step; by definition, $S(w)$ is the standard Young tableau thus obtained.

{\it Example}. Let $x_1<x_2<y_1<y_2$ and let $L_e=\{x_1,x_2\}$, $L_o=\{y_1,y_2\}$.
Then the algorithm transforms the word $w= x_1 y_1 y_1 y_2 x_2 x_1 y_1$ to the pair of
tableaux

\begin{picture}(100,100)
\put(5,5){$y_1$}
\put(5,25){$y_1$}
\put(5,45){$x_2$}
\put(5,65){$x_1$}
\put(25,65){$x_1$}
\put(25,45){$y_2$}
\put(45,65){$y_1$}
\put(0,0){\line(0,1){80}}
\put(0,80){\line(1,0){60}}
\put(60,80){\line(0,-1){20}}
\put(60,60){\line(-1,0){20}}
\put(40,60){\line(0,-1){20}}
\put(40,40){\line(-1,0){20}}
\put(20,40){\line(0,-1){40}}
\put(20,0){\line(-1,0){20}}
\put(0,60){\line(1,0){40}}
\put(0,40){\line(1,0){20}}
\put(0,20){\line(1,0){20}}
\put(20,80){\line(0,-1){40}}
\put(40,80){\line(0,-1){20}}
\end{picture}
\begin{picture}(100,100)
\put(5,5){$6$}
\put(5,25){$5$}
\put(5,45){$3$}
\put(5,65){$1$}
\put(25,65){$2$}
\put(25,45){$7$}
\put(45,65){$4$}
\put(0,0){\line(0,1){80}}
\put(0,80){\line(1,0){60}}
\put(60,80){\line(0,-1){20}}
\put(60,60){\line(-1,0){20}}
\put(40,60){\line(0,-1){20}}
\put(40,40){\line(-1,0){20}}
\put(20,40){\line(0,-1){40}}
\put(20,0){\line(-1,0){20}}
\put(0,60){\line(1,0){40}}
\put(0,40){\line(1,0){20}}
\put(0,20){\line(1,0){20}}
\put(20,80){\line(0,-1){40}}
\put(40,80){\line(0,-1){20}}
\end{picture}

Denote the maximal cardinality of a disjoint union of $k$ increasing (resp. decreasing) subsequences
of the word $w$ by $r_k(w)$ (resp. $c_k(w)$).

\begin{proposition}

a)
The generalised RSK-algorithm provides a bijection between
$\mathcal A^n$ and the set of pairs $(R,S)$, where $R$ is an
$\mathcal A_p$-tableau, $S$ is a standard Young tableau, and $R$,$S$ have
the same shape consisting of $n$ boxes.

b)
The following relations hold
\begin{equation*}
r_k (w) = \sum_{i=1}^k \lambda_i (\phi_p(w)); \ \ \ \
c_k (w) = \sum_{j=1}^k \lambda'_j(\phi_p(w)).
\end{equation*}

\end{proposition}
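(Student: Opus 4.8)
The plan is to treat the two parts separately, following the classical development of the Robinson--Schensted--Knuth correspondence and of Greene's theorem, while keeping track of the modified bumping conventions attached to the letters of $L_e$, $L_o$ and $G$.

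For part (a) I would first verify that the single-letter insertion $x \to T$ always produces a valid $\mathcal A_p$-tableau, and then show that it is reversible. The key point is to set up the reverse row-bumping: given the output tableau together with the position of the box that was created last, one removes that box, carries its entry up into the preceding row, and there replaces the rightmost entry that is appropriately smaller—strictly smaller, or smaller-or-equal, according to whether the carried letter belongs to $L_e$ or to $L_o \cup G$—iterating until an entry leaves the first row. This last entry is the inserted letter $x$, and the remaining tableau is $T$. One checks that at each stage the entry to be replaced is forced, so reverse bumping is single-valued and is a two-sided inverse of insertion. With this in hand the bijection follows in the usual way: the standard tableau $S$ records the order in which boxes are created, so reading the entries $n, n-1, \dots, 1$ of $S$ tells us which corner to undo at each step, and successive reverse bumping reconstructs the letters $x_n, x_{n-1}, \dots, x_1$, hence the word $w$. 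The only novelty compared with ordinary RSK is the bookkeeping of the strict-versus-weak inequalities introduced by $\nearrow$ and $\searrow$; once one checks that these conventions are preserved by insertion and inverted unambiguously by reverse bumping, the argument is routine.

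For part (b) I would prove the generalized Greene theorem by establishing two inequalities in each identity. The lower bounds $r_k(w) \ge \sum_{i=1}^k \lambda_i(\phi_p(w))$ and $c_k(w) \ge \sum_{j=1}^k \lambda'_j(\phi_p(w))$ are constructive: from $R(w)$ one exhibits $k$ disjoint $\nearrow$-increasing (resp. $\searrow$-decreasing) subsequences whose total length realizes the right-hand side, by tracing how the first $k$ rows (resp. columns) of the tableau are built up during insertion. For the matching upper bounds I would use the invariance of $r_k$ and $c_k$ under the elementary Knuth-type transformations adapted to the alphabet $\mathcal A$: such transformations leave $R(w)$ unchanged, so $w$ may be replaced by a canonical reading word of a tableau of shape $\phi_p(w)$, for which $r_k$ and $c_k$ can be computed directly. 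Alternatively one can obtain the upper bound by a direct pigeonhole estimate, bounding how many elements any $k$ increasing (resp. decreasing) subsequences can contain across the rows (resp. columns) of $R(w)$.

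The step I expect to be the main obstacle is the upper bound in part (b): showing that no $k$ disjoint $\nearrow$-increasing subsequences can have total length exceeding $\lambda_1 + \dots + \lambda_k$, and dually for decreasing subsequences. This is where the interplay between the insertion dynamics and the subsequence structure is most delicate, and it is precisely here that the asymmetry between $\nearrow$ and $\searrow$—that is, the distinction between the $L_e$ letters and the $L_o \cup G$ letters—must be handled with care. The remaining verifications, namely well-definedness of insertion, the forced nature of reverse bumping, and the constructive lower bounds, are direct adaptations of the classical arguments and should present no essential difficulty.
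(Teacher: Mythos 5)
The paper does not actually prove this proposition: its ``proof'' consists of the remark that the statement generalises Schensted's theorem and of references to Kerov--Vershik and to Fulton's book. So there is no internal argument to compare against, and the question is only whether your outline would succeed. For part (a) it would: insertion preserves the $\mathcal A_p$-tableau property, and reverse bumping is forced once $S$ tells you which corner box to undo at each step; the only new bookkeeping is the strict-versus-weak comparison attached to $L_e$ versus $L_o\cup G$, which you correctly flag. This is exactly the classical argument the paper defers to.

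Part (b) is where your proposal has a genuine gap, and you acknowledge it yourself. The identity for general $k$ is a Greene-type theorem, not merely Schensted's, and neither of your suggested routes is carried out. The ``constructive lower bound by tracing how the first $k$ rows are built up'' does not work as stated for $k\ge 2$: the rows of $R(w)$ are not subsequences of $w$, so producing $k$ disjoint increasing subsequences of total length $\lambda_1+\dots+\lambda_k$ already requires the Knuth-equivalence machinery (or a careful bumping-path argument), not a direct reading of the tableau. The ``pigeonhole'' alternative for the upper bound likewise has no elementary form for $k\ge 2$; the standard proof is via invariance of $r_k$ and $c_k$ under the elementary Knuth-type transformations adapted to $\mathcal A$, together with the direct computation of $r_k$, $c_k$ for the reading word of a tableau, and verifying that these transformations both preserve $R(w)$ and respect the $\nearrow/\searrow$ conventions is precisely the nontrivial content (this is what Regev--Seeman and Berele--Regev supply). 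Since you explicitly leave this step as ``the main obstacle,'' what you have is an accurate plan rather than a proof; in fairness, the paper itself also only cites the literature at this point.
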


\begin{proof}
This proposition is a generalisation of Shensted's theorem (see \cite{Sh}).
In \cite[Prop.1]{vershik-kerov} it was pointed out that the proof is
analogous to the proof of Shensted's theorem (see, e.g., \cite{fulton}).
\end{proof}

Let $\Lambda$ be the algebra of symmetric functions in infinitely many variables
(see \cite[Ch. 1.2]{macdonald}). Let $h_n$ be the complete homogeneous symmetric
functions and let $s_{\lambda}$ be the Schur functions. Define the generating
function of $\{ h_n \}$ by the rule

\begin{equation*}
H(z)=1+ \sum_{n=1}^{\infty} h_n z^n
\end{equation*}
and let
\begin{equation*}
\pi^{\mathcal P} \colon \Lambda \to \mathbb C
\end{equation*}
be the homomorphism defined by the formula

\begin{equation*}
\pi^{\mathcal P} (H(z)) = e^{\gamma z} \prod_{i \ge 1} \frac {1+ \beta_i z}{1- \alpha_i z}.
\end{equation*}

\begin{proposition}

a) Let $P_{\mathcal P} (\lambda)$ denote the probability that the random
filling of the diagram $\lambda$ with independent letters from $\mathcal A$ with
common distribution $\mu_1$ produces an $\mathcal A_p$-tableau. Then

\begin{equation*}
P_{\mathcal P} (\lambda) = \pi^{\mathcal P} (s_{\lambda}).
\end{equation*}

b) Suppose $\lambda \in \mathbb Y_n$. Then
\begin{equation*}
\mu_n (w : \phi_p (w) = \lambda) = \pi^{\mathcal P} (s_{\lambda}) \dim \lambda = M_n^{\mathcal P} (\lambda).
\end{equation*}
\end{proposition}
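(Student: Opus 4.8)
The plan is to treat the two parts in turn, using the combinatorial description of $\mathcal A_p$-tableaux to connect the probabilistic quantities with the specialization $\pi^{\mathcal P}$. For part a) I would first rewrite the probability as a tableau sum. The boxes of $\lambda$ are filled independently with law $\mu_1$, so a fixed filling $T$ occurs with weight $\prod_{b \in \lambda} \mu_1(T(b))$, and such a filling is an $\mathcal A_p$-tableau exactly when it is increasing along rows and decreasing along columns in the sense of $\nearrow, \searrow$. Hence
\[
P_{\mathcal P}(\lambda) = \sum_{T} \ \prod_{b \in \lambda} \mu_1(T(b)),
\]
the sum ranging over all $\mathcal A_p$-tableaux $T$ of shape $\lambda$. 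This is precisely the combinatorial (super, or hook) Schur function in the letters of $\mathcal A$ weighted by $\mu_1$, so part a) reduces to identifying this generating function with $\pi^{\mathcal P}(s_\lambda)$.

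To carry this out I would first treat the one-row case $\lambda = (n)$. A one-row $\mathcal A_p$-tableau is an increasing word, and an increasing word is specified by choosing, for each value in the linear order $p$, its multiplicity, subject to the repetition rule. Summing $\prod \mu_1$ over all such words, the generating function factorizes over the alphabet: each even letter $x_i$ (which may repeat) contributes $(1 - \alpha_i z)^{-1}$, each odd letter $y_j$ (which may occur at most once) contributes $(1 + \beta_j z)$, and the continuous part contributes $e^{\gamma z}$, since $k$ independent continuous samples are almost surely distinct and admit a unique increasing arrangement, giving total weight $\gamma^k / k!$. Thus
\[
\sum_{n \ge 0} P_{\mathcal P}((n)) \, z^n = e^{\gamma z} \prod_{i \ge 1} \frac{1 + \beta_i z}{1 - \alpha_i z} = \pi^{\mathcal P}(H(z)),
\]
so that $P_{\mathcal P}((n)) = \pi^{\mathcal P}(h_n)$. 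For a general shape I would pass through the Jacobi--Trudi identity $s_\lambda = \det(h_{\lambda_i - i + j})$: applying the homomorphism $\pi^{\mathcal P}$ turns the right-hand side into $\det\bigl(P_{\mathcal P}((\lambda_i - i + j))\bigr)$, and a Lindstr\"om--Gessel--Viennot argument identifies this determinant with a sum over families of non-intersecting paths, equivalently over $\mathcal A_p$-tableaux of shape $\lambda$, reproducing the tableau sum above. Alternatively one may invoke directly the known combinatorial formula for super Schur functions, see \cite{macdonald}.

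For part b) the first equality combines Proposition~1(a) with the content-preserving property of the algorithm. Each insertion step adds exactly one box carrying the inserted letter while merely rearranging the others, so the multiset of entries of $R(w)$ coincides with the multiset of letters of $w$; consequently $\mu_n(w) = \prod_{b \in \lambda} \mu_1(R(w)(b))$ depends only on $R(w)$. Since $w \mapsto (R(w), S(w))$ is a bijection onto pairs of equal shape, the sum over all $w$ with $\phi_p(w) = \lambda$ factors into a sum over the standard tableaux $S$ of shape $\lambda$, whose number is $\dim \lambda$, times the sum over $R$, which is $P_{\mathcal P}(\lambda)$. This gives
\[
\mu_n(w : \phi_p(w) = \lambda) = \dim \lambda \cdot P_{\mathcal P}(\lambda) = \dim \lambda \cdot \pi^{\mathcal P}(s_\lambda).
\]
The remaining equality $\dim \lambda \cdot \pi^{\mathcal P}(s_\lambda) = M_n^{\mathcal P}(\lambda)$ is the defining link between the Thoma parameters and the coherent system: I would check that $\lambda \mapsto \dim \lambda \cdot \pi^{\mathcal P}(s_\lambda)$ is coherent and correctly normalized, using that $\pi^{\mathcal P}$ is an algebra homomorphism with $\pi^{\mathcal P}(s_{(1)}) = 1$ together with the Pieri rule $s_{(1)} s_\mu = \sum_{\mu \uparrow \lambda} s_\lambda$, and then appeal to the Vershik--Kerov correspondence \cite{vershik-kerov} to match it with $M_n^{\mathcal P}$.

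The step I expect to be the main obstacle is the identification in part a) in the presence of the continuous component $G$, since the super-symmetric function formalism is classically formulated for finitely many discrete variables. I would resolve this either by approximating $G$ by a growing family of atoms of small mass and passing to the limit, or by establishing the determinantal (Lindstr\"om--Gessel--Viennot) identity directly for the continuous alphabet. Once the one-row generating function and the Jacobi--Trudi reduction are secured, the rest is routine bookkeeping.
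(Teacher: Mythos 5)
The paper does not actually prove this proposition: it simply cites \cite[Prop.~3 and Th.~1]{vershik-kerov}, so your proposal is necessarily a different route in the sense that you reconstruct the argument the paper delegates to the reference. Your reconstruction is the standard one and is sound in outline: the tableau-sum expression for $P_{\mathcal P}(\lambda)$, the one-row computation giving $\sum_n P_{\mathcal P}((n))z^n = \pi^{\mathcal P}(H(z))$ (with the factor $\gamma^k/k!$ for the continuous block correctly accounted for), the Jacobi--Trudi reduction for general shapes, and the content-preservation plus bijectivity of the generalised RSK map for the first equality in part b). You are also right to single out the continuous component $G$ as the point requiring care, since the super-Schur formalism is usually stated for discrete alphabets; both of your proposed fixes (approximation by small atoms, or a direct LGV argument for the mixed alphabet) are viable, and this is essentially what the Vershik--Kerov and Berele--Regev treatments do.

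One step in your plan is genuinely underpowered as stated: the final identification $\dim\lambda\cdot\pi^{\mathcal P}(s_\lambda) = M_n^{\mathcal P}(\lambda)$. Verifying that $\lambda\mapsto\dim\lambda\cdot\pi^{\mathcal P}(s_\lambda)$ is a correctly normalized coherent system (via the Pieri rule and $\pi^{\mathcal P}(s_{(1)})=1$) shows only that it is \emph{some} coherent system; coherence does not single out which extremal character it belongs to, since the coherent systems form an infinite-dimensional simplex. To pin it down you must match character values: expand $\chi|_{S(n)} = \sum_\lambda \frac{M_n(\lambda)}{\dim\lambda}\chi^\lambda$ and check that the resulting central function on $S(n)$ agrees with Thoma's multiplicative formula for the parameters $\mathcal P$, which amounts to computing $\pi^{\mathcal P}$ on power sums ($p_k\mapsto\sum_i\alpha_i^k+(-1)^{k+1}\sum_j\beta_j^k$ for $k\ge 2$, $p_1\mapsto 1$) and using the expansion of $p_\rho$ in Schur functions. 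Your closing phrase ``appeal to the Vershik--Kerov correspondence'' presumably means exactly this, but as written it risks circularity, since that correspondence is the content of the very theorem being cited. With that step made explicit, your argument is complete and self-contained where the paper is not.
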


\begin{proof} See \cite[Prop.3 and Th.1]{vershik-kerov}. \end{proof}

There are other similar generalisations of the RSK-algorithm (see \cite{berele-regev},
\cite{regev-seeman}); they possess the properties stated in Proposition 1a and
Proposition 2b but not that of Proposition 1b.

\subsection{}

A set $I \subset \mathcal A$ is called an {\it interval} if for any $a_1,a_2 \in I$
and $a \in \mathcal A$ such that

\begin{equation*}
a_1 < a < a_2
\end{equation*}
it follows that $a \in I$.
For the sake of convenience, in what follows we consider
only those linear orders on
$\mathcal A$ for which $G$ is an interval.

By $n_i(R)$ (resp. $n'_j (R)$) denote the number of letters $x_i$ (resp. $y_j$) in
an $\mathcal A_p$-tableau $R$. Let $m(R)$ be the number of letters from $G$ in $R$. The
collection of numbers $(\{ n_i(R) \},\{ n'_j(R) \},m(R))$ is called the {\it type} of
$\mathcal A_p$-tableau $R$ and is denoted by $type(R)$.

We  recall that different linear orders on $\mathcal A$ produce different
maps
\begin{equation*}
\phi_p \colon \mathcal A^n \to \mathbb Y_n.
\end{equation*}

\begin{lemma}
Suppose a collection of numbers $(\{ n_i \} ; \{ n'_j\}; m)$ is fixed, and
take a diagram $\lambda \in \mathbb Y_n$. Then the probability
\begin{equation*}
\mu_n(w: \phi_p(w) = \lambda ; type(R(w))=(\{n_i \}; \{ n'_j \};m))
\end{equation*}
does not depend on a specific choice of order $p$.
\end{lemma}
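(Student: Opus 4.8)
The plan is to reduce the statement to a property of fillings of the single diagram $\lambda$, and then to a known symmetry of super (hook) Schur functions. First, observe that $\mathrm{type}(R(w))$ depends only on $w$ and not on $p$: the generalised RSK-algorithm never creates or destroys letters, so the multiset of entries of $R(w)$ coincides with the multiset of letters of $w$. Hence $n_i(R(w))$, $n'_j(R(w))$, $m(R(w))$ are simply the numbers of $x_i$, $y_j$ and continuous letters occurring in $w$, and the event $\{\mathrm{type}(R(w))=(\{n_i\};\{n'_j\};m)\}$ is the order-independent event that $w$ has the prescribed content $\tau$. Moreover $\mu_n(w)=\prod_i\alpha_i^{n_i}\prod_j\beta_j^{n'_j}(\gamma/|G|)^{m}$ (the last factor read as a density in the continuous letters) depends on $w$ only through its content.

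Next I would invoke Proposition 1a. Through the bijection $w\leftrightarrow(R(w),S(w))$ the words with $\phi_p(w)=\lambda$ and content $\tau$ correspond to pairs $(R,S)$, where $R$ is an $\mathcal A_p$-tableau of shape $\lambda$ and type $\tau$ and $S$ is a standard Young tableau of shape $\lambda$; since insertion merely rearranges the continuous values it preserves Lebesgue measure on the continuous coordinates. As $\mu_n(w)$ depends only on $R$ (through its content), summing over the $\dim\lambda$ choices of $S$ gives
\[
\mu_n\bigl(w:\phi_p(w)=\lambda,\ \mathrm{type}(R(w))=\tau\bigr)=\dim\lambda\cdot Q_p(\lambda,\tau),
\]
where $Q_p(\lambda,\tau)$ is the probability that a random filling of $\lambda$ by independent $\mu_1$-letters is an $\mathcal A_p$-tableau of type $\tau$ (a refinement of Proposition 2a, whose left-hand side is the sum of $Q_p(\lambda,\tau)$ over all $\tau$). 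Since $\dim\lambda$ is order-independent, it suffices to prove that $Q_p(\lambda,\tau)$ does not depend on $p$.

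Then I would factor out the order-independent weights: integrating the continuous density over all admissible values yields $Q_p(\lambda,\tau)=\prod_i\alpha_i^{n_i}\prod_j\beta_j^{n'_j}\,\gamma^{m}\cdot N_p(\lambda,\tau)$, where $N_p(\lambda,\tau)$ is purely combinatorial: the weighted number of $\mathcal A_p$-tableaux of shape $\lambda$ and type $\tau$, each admissible continuous configuration being counted by the volume of its order polytope (equivalently, by its number of linear extensions divided by $m!$, the factor $|G|^m$ cancelling). Everything thus comes down to showing that $N_p(\lambda,\tau)$ is independent of $p$. This is exactly the assertion that the generating function of $\mathcal A_p$-tableaux by content---the hook/super Schur function---depends only on the splitting $L_e\sqcup L_o$ of the alphabet into ``even'' and ``odd'' letters, and not on their relative order; see \cite{berele-regev}, \cite[I.3]{macdonald}.

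To establish this order-independence I would reduce an arbitrary change of order to elementary adjacent transpositions: because $G$ is required to be an interval, any admissible order is a linear arrangement of the discrete letters together with one block $G$, and any two such arrangements are joined by transpositions of neighbouring items. For each elementary swap I would exhibit a content- and shape-preserving involution on tableaux in the spirit of the Bender--Knuth involution---the classical one for two neighbouring even letters, together with its variants for two odd letters and for an even--odd pair, adapted to the $\nearrow/\searrow$ tie conventions separating $L_e$ from $L_o\cup G$. The main obstacle is the swap of a discrete letter across the continuous block $G$, where the two exchanged objects are a single symbol and a continuum. I would treat it by approximating $G$ with $g$ equally weighted discrete letters, applying the finite involution to each of the resulting $g$ elementary swaps, and passing to the limit $g\to\infty$; the approximating ties have vanishing probability, so the parity assigned to the approximating letters is immaterial, and order-independence, holding at every finite stage, survives in the limit. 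Checking that the involutions respect the tie rules and that this limiting procedure is legitimate is the most delicate point of the argument.
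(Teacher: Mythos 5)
Your reduction is the same as the paper's: you pass through the RSK bijection of Proposition 1a, observe that $\mu_n(w)$ depends only on the content of $w$, sum over the $\dim\lambda$ recording tableaux, and handle the continuous letters by fixing their relative order (your order-polytope volume $\gamma^m/m!$ per admissible configuration is exactly the paper's factor $\frac{1}{m!}$ attached to a fixed choice $g_1<\dots<g_m$). Everything then rests on the single combinatorial fact that the number of $\mathcal A_p$-tableaux of fixed shape and type is independent of $p$. Here the two arguments diverge: the paper simply cites the shuffle-invariance theorem of Regev and Seeman (\cite[Th.3]{regev-seeman}), whereas you propose to prove it from scratch by Bender--Knuth-type involutions for adjacent transpositions, plus a discretize-and-pass-to-the-limit argument for moving a discrete letter across the block $G$. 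That route is viable in principle, but the steps you yourself flag as delicate --- the even--odd adjacent involution respecting the $\nearrow/\searrow$ tie conventions, and the legitimacy of the $g\to\infty$ limit --- are precisely the content of the cited theorem, and you do not carry them out; as written this is a sketch of a known result rather than a complete proof. The cleanest fix is to replace that portion of your argument with the citation, at which point your proof coincides with the paper's. (Your preliminary observation that $type(R(w))$ equals the content of $w$ and is therefore order-independent is correct and is used implicitly in the paper as well.)
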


\begin{proof}
Note that the probability of coincidence of two letters from $G$ in $w$ is equal to 0;
hence it can be assumed that all letters from $G$ in $w$ are pairwise distinct. Let $g_1< \dots <g_m$
be arbitrary letters from $G$. Consider a collection of $n$ letters
$\Omega = (\{x_i \}, \{ y_j \}, g_1, \dots, g_m )$, where $x_i$ appears $n_i$ times, $y_j$
appears $n'_j$ times. Consider various fillings of $\lambda$ by all letters of $\Omega$ forming
an $\mathcal A_p$-tableau. It follows from \cite[Th.3]{regev-seeman}
that the number of such fillings does not depend on a specific choice of order $p$. Denote this number by
$d( \{ n_i \}; \{ n'_j \};m)$. By virtue of Proposition 1a, there are
exactly $\dim\lambda$ words composed from
the letters of $\Omega$ that are associated with every such filling.
Therefore
the probability of every such filling of diagram $\lambda$ is equal to

\begin{equation*}
\dim \lambda \prod_{i \ge 1} \alpha_i^{n_i} \prod_{j \ge 1} \beta_j^{n'_j} \frac{1}{m!},
\end{equation*}
where the factor $\frac{1}{m!}$ comes from the condition $g_1 < g_2 < \dots < g_m$.
Consequently,

\begin{multline*}
\mu_n(w: \phi_p(w) = \lambda ; type(R(w))=(\{n_i\};\{n'_j\};m)) = \\ \dim \lambda \frac{d(\{n_i\};\{n'_j\};m)}{m!}
\prod_{i \ge 1} \alpha_i^{n_i} \prod_{j \ge 1} \beta_j^{n'_j},
\end{multline*}
where the right-hand side does not depend on $p$.
\end{proof}

{\bf Corollary}.
The distributions of $\lambda_i^{\mathcal P} (n) - N_{x_i} (n)$,
$\lambda_j^{' \mathcal P} (n) - N_{y_j} (n)$ do not depend on $p$.

Let us fix an order $p$ on $\mathcal A$ and let $I$ be an interval of $\mathcal A$
with respect to $p$.
Consider a new alphabet $\mathcal A^{*}$ obtained from $\mathcal A$ by shrinking $I$ to a single new letter $z$,
all letters of $\mathcal A\setminus I$ remainng unchanged.
We assume that $z\in L_e$ and $\mu_1(\{z\})=\mu_1(I)$.
We say that $\mathcal A^{*}$ is an {\it amalgamation} of $\mathcal A$.

Recall that the map $\phi_p$ is defined on the probability space $(\mathcal A^n, \mu_n)$; in this
case the map $\phi_p^{*}$ can be naturally defined as

\begin{equation*}
\phi^*_p \colon \mathcal A^n \to \mathbb Y_n,
\end{equation*}
i.e. on the same probability space. Thus we can compare lengths of rows
of random Young diagrams generated by measures on $\mathcal A$ and $\mathcal A^{*}$.

\begin{lemma}
For any $k>0$ the following inequality holds
\begin{equation*}
\sum_{i=1}^k \lambda_i^{\mathcal P} (n) \le \sum_{i=1}^k \lambda_i^{\mathcal P^*} (n).
\end{equation*}

\end{lemma}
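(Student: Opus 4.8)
The plan is to reduce the inequality to a statement about increasing subsequences via Proposition 1b, and then prove the latter pointwise on the common probability space $(\mathcal{A}^n, \mu_n)$. For a word $w \in \mathcal{A}^n$, write $w^*$ for the word over $\mathcal{A}^*$ obtained by replacing every letter lying in $I$ by the new letter $z$ and leaving the other letters unchanged; by construction $\phi_p^*(w)$ is the shape produced by the generalised RSK-algorithm applied to $w^*$. Let $r_k^*$ denote the maximal cardinality of a disjoint union of $k$ increasing subsequences taken with respect to the order on $\mathcal{A}^*$. By Proposition 1b we have $\sum_{i=1}^k \lambda_i^{\mathcal{P}}(n) = r_k(w)$ and $\sum_{i=1}^k \lambda_i^{\mathcal{P}^*}(n) = r_k^*(w^*)$, so it suffices to prove the deterministic inequality $r_k(w) \le r_k^*(w^*)$ for every fixed $w$.

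The key step is to show that the letter map $a \mapsto a^*$, which fixes $\mathcal{A}\setminus I$ and sends all of $I$ to $z$, carries the relation $\nearrow$ on $\mathcal{A}$ into the relation $\nearrow$ on $\mathcal{A}^*$; that is, $a \nearrow b$ implies $a^* \nearrow b^*$. I would verify this by a short case analysis on whether $a$ and $b$ lie in $I$. When neither lies in $I$, the letters and their mutual order are unchanged, so the claim is immediate. When exactly one lies in $I$, say $a \in I$ and $b \notin I$ with $a \nearrow b$, then $a < b$ (equality is impossible), and since $I$ is an interval and $b \notin I$ lies above an element of $I$, the letter $b$ must exceed all of $I$; hence $z < b$ and $a^* = z \nearrow b = b^*$, and symmetrically when $b \in I$ and $a \notin I$. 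The remaining case is $a, b \in I$: then $a^* = b^* = z$, and because we have declared $z \in L_e$ we have $z \nearrow z$, so $a^* \nearrow b^*$ holds regardless.

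Granting this, any increasing subsequence of $w$ (consecutive entries related by $\nearrow$) maps entrywise to an increasing subsequence of $w^*$ occupying the same positions and of the same length. A disjoint union of $k$ increasing subsequences of $w$ therefore maps to a disjoint union of $k$ increasing subsequences of $w^*$, the positions being untouched so that disjointness is preserved. This yields $r_k(w) \le r_k^*(w^*)$, and hence the claim. The only delicate point is the case analysis above, and within it the genuinely essential ingredient is the choice $z \in L_e$: this is exactly what guarantees $z \nearrow z$, so that two $I$-letters forming part of an increasing run continue to do so after amalgamation. Had $z$ been placed in $L_o \cup G$, the relation $\nearrow$ would fail to be preserved on such pairs and the monotonicity could break down.
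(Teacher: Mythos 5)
Your proposal is correct and follows essentially the same route as the paper: reduce to $r_k$ via Proposition 1b and observe that increasing subsequences survive amalgamation because $z\in L_e$. You merely spell out the case analysis (using that $I$ is an interval) that the paper compresses into a single sentence.
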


\begin{proof}
By Proposition 1b it follows that

\begin{equation*}
\sum_{i=1}^k \lambda_i^{\mathcal P} (n) = r_k(w);
\end{equation*}
\begin{equation*}
\sum_{i=1}^k \lambda_i^{\mathcal P^*} (n) = r_k(w^*).
\end{equation*}

Notice that any increasing (in the sense of section 2.1) subsequence of
a word $w \in \mathcal A^n$ turns into the increasing subsequence of the
corresponding word $w^* \in \mathcal A^{*n}$ because the new letter $z$
belongs to the set $L_e$. Hence for any $w$ we have

\begin{equation*}
r_k(w^*) \ge r_k(w).
\end{equation*}
\end{proof}

Let $p^t$ be the order on $\mathcal A$ which is inverse to $p$. Assume that
\begin{equation*}
L_e^t = L_o, \ \ \
L_o^t = L_e.
\end{equation*}
Define the {\it transposed} map

\begin{equation*}
\phi_{p^t} \colon \mathcal A^n \to \mathbb Y_n
\end{equation*}
by the generalised RSK-algorithm applied to the order $p^t$ and the sets $L_e^t, L_o^t, G$.
This map changes the roles of rows and columns of Young diagram or,
equivalently, the roles of parameters $\{\alpha_i \}$ and $\{\beta_j\}$.

By $\lambda^t$ we denote the Young diagram which is the transpose
of the Young diagram $\lambda$.

\begin{lemma}
\begin{equation*}
\phi_p(w) = \phi_{p^t}(w)^t \mbox{ for almost all } w.
\end{equation*}
\end{lemma}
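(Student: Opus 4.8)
The plan is to reduce the claimed identity of Young diagrams to an identity between the combinatorial quantities $r_k$ and $c_k$ governed by Proposition 1b, and then to exploit the fact that passing from the order $p$ to the inverse order $p^t$ (with $L_e$ and $L_o$ interchanged) turns increasing subsequences into decreasing ones. First I would restrict to the full-measure event that no two letters of $w$ both lie in $G$ and coincide: the probability that two letters from $G$ are equal is $0$, so this event has $\mu_n$-measure one, and this is exactly what the phrase ``for almost all $w$'' permits me to discard.

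The key combinatorial step is to compare the relation $\nearrow$ for the pair $(p, L_e, L_o)$ with the relation $\searrow$ for the pair $(p^t, L_e^t, L_o^t)$, where $L_e^t = L_o$ and $L_o^t = L_e$. Unwinding the definitions of Section 2.1, $a \nearrow b$ in the order $p$ means $a < b$ or $a = b \in L_e$, whereas $a \searrow b$ in the order $p^t$ means $a >_{p^t} b$ or $a = b \in L_o^t \cup G$, i.e. $a < b$ (in $p$) or $a = b \in L_e \cup G$. On the almost-sure event fixed above these two conditions agree on every pair of positions of $w$, the sole discrepancy being ties inside $G$, which have been excluded. Hence, for almost all $w$, a subsequence of $w$ is increasing with respect to $p$ if and only if it is decreasing with respect to $p^t$; consequently the maximal total size of $k$ disjoint increasing subsequences for $p$ equals the maximal total size of $k$ disjoint decreasing subsequences for $p^t$:
\[
r_k(w)\big|_{p} \;=\; c_k(w)\big|_{p^t}, \qquad k \ge 1 .
\]

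Next I would apply Proposition 1b twice, once to $\phi_p$ and once to $\phi_{p^t}$, to translate both sides into diagram data. The left-hand side equals $\sum_{i=1}^{k} \lambda_i\bigl(\phi_p(w)\bigr)$, and the right-hand side equals $\sum_{j=1}^{k} \lambda'_j\bigl(\phi_{p^t}(w)\bigr)$. Since the $j$-th column length of a diagram $\mu$ is the $j$-th row length of its transpose $\mu^t$, the latter sum is $\sum_{i=1}^{k} \lambda_i\bigl(\phi_{p^t}(w)^t\bigr)$. Therefore the diagrams $\phi_p(w)$ and $\phi_{p^t}(w)^t$ have equal partial row sums $\sum_{i=1}^k \lambda_i$ for every $k \ge 1$, and a Young diagram is uniquely determined by these partial sums, so the two diagrams coincide for almost all $w$.

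The only delicate point is the bookkeeping in the second paragraph: verifying that inverting the order together with swapping $L_e$ and $L_o$ interchanges $\nearrow$ and $\searrow$ precisely, up to the measure-zero set of ties within $G$. Once this correspondence is checked the remainder is immediate, the passage from $r_k = c_k$ to the equality of diagrams being a purely formal consequence of Proposition 1b and of the fact that row sums determine the shape.
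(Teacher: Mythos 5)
Your proof is correct and follows essentially the same route as the paper: restrict to the full-measure event that the letters of $w$ lying in $G$ are pairwise distinct, observe that increasing subsequences for $(p,L_e,L_o)$ are exactly the decreasing subsequences for $(p^t,L_e^t,L_o^t)$, and conclude via Proposition 1b since partial row sums determine the diagram. The paper's proof is just a terser version of the same argument; your explicit check of the $\nearrow$/$\searrow$ bookkeeping is a welcome addition.
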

\begin{proof}
Suppose all letters from $G$ in $w$ are distinct (this condition is necessary
because the relations $x_1 \nearrow x_2$ and $x_1 \searrow x_2$ are not formally
symmetric); then any increasing subsequence with respect to $p$ and $L_e \cup L_o$
is a decreasing subsequence with respect to $p^t$ and $L_e^t \cup L_o^t$.
Therefore the lemma follows from Proposition 1b.
\end{proof}

\subsection{}

Let $q_1,q_2,q_3 \geq 0$, $q_1 < q_3$, and $q_1+q_2+q_3 = 1$.
Consider a random walk on the set $\{0,1,2 \dots \}$ under which
a particle goes right with probability $q_1$ and left
with probability $q_3$ (except for the point 0). In the first moment the
particle is in 0. By $\Psi_{q_3,q_1}(n)$ denote the position of
particle after $n$ steps. In the other words $\Psi_{q_3,q_1}(n)$ is a
Markovian chain with the transition matrix

\begin{equation*}
D=\begin{pmatrix}
q_3+q_2 & q_1 & 0 & 0 & \ldots & \ldots \\
q_3 & q_2 & q_1 & 0 & 0 & \ldots \\
0 & q_3 & q_2 & q_1 & 0 & \ldots \\
\vdots & \ddots & \ddots & \ddots & \ddots & \ddots
\end{pmatrix}
\end{equation*}
and the initial vector $\vec{a}_0 = (1,0,0,0, \ldots)$.

\begin{lemma}
There exists a constant $C$ such that
\begin{equation*}
\mathbf E \Psi_{q_3,q_1}(n) < C \ \ \ \ \mbox{ for all n}.
\end{equation*}
\end{lemma}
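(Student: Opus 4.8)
The plan is to exploit the fact that, since $q_1 < q_3$, the chain $\Psi_{q_3,q_1}(n)$ is a birth--death chain on $\{0,1,2,\dots\}$ with a strict drift towards the origin, hence positive recurrent with a geometric stationary distribution of finite mean. The bound will then follow by showing that the law of $\Psi_{q_3,q_1}(n)$ started from $0$ is stochastically dominated by this stationary law, uniformly in $n$.

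First I would compute the stationary distribution $\pi$ of $D$ explicitly. Because $D$ is a nearest-neighbour (birth--death) kernel, $\pi$ satisfies the detailed balance relations $\pi_k q_1 = \pi_{k+1} q_3$, whence $\pi_k = (1-\rho)\rho^k$ with $\rho = q_1/q_3$. The assumption $q_1 < q_3$ gives $\rho < 1$, so $\pi$ is a genuine probability measure with finite expectation $\sum_{k \ge 0} k\,\pi_k = \rho/(1-\rho)$. I set $C := \rho/(1-\rho)$; note that it depends only on $q_1,q_3$ and not on $n$.

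Next I would prove that $\Psi_{q_3,q_1}(n)$ is stochastically dominated by $\pi$ for all $n$. The key structural fact is that $D$ is stochastically monotone: for states $i \le j$ the tail sums $\sum_{k \ge m} D(i,k)$ are nondecreasing in the starting state. This is immediate for $i,j \ge 1$, where the row $D(i{+}1,\cdot)$ is a unit shift to the right of $D(i,\cdot)$, and it is checked directly at the reflecting state $0$. Equivalently, one can construct an explicit monotone coupling driven by a single sequence of uniform variables, updating every coordinate by the common rule ``move down if $U \le q_3$, staying at $0$ if already there; stay if $q_3 < U \le q_3+q_2$; move up otherwise''; this rule is order-preserving. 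Starting one copy from $0$ and the other from $\pi$, and using $\delta_0 \preceq \pi$ together with stationarity $\pi D = \pi$, monotonicity yields $\Psi_{q_3,q_1}(n) \preceq \pi$ for every $n$. Stochastic domination of nonnegative integer-valued variables then gives
\[
\mathbf E\,\Psi_{q_3,q_1}(n) = \sum_{k\ge 0}\mathbf P\bigl(\Psi_{q_3,q_1}(n) > k\bigr) \le \sum_{k\ge 0}\sum_{l > k}\pi_l = \sum_{l \ge 0} l\,\pi_l = C,
\]
which is the desired uniform bound.

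The main obstacle is the verification of order-preservation at the reflecting boundary $0$: away from $0$ the coupling trivially preserves order, but one must check that when the lower copy sits at $0$ and is forced to stay while the upper copy steps down, the two copies do not cross. This boundary case is precisely where the reflection of the walk enters, and it is the one point that requires care.
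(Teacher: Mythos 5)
Your proof is correct, and it reaches the bound by a route that differs from the paper's in its key mechanism. The paper compares with the same geometric invariant measure, but implements the comparison more cheaply: it takes the unnormalized invariant row vector $\vec{a}=(2,\,2\rho,\,2\rho^2,\ldots)$ with $\rho=q_1/q_3$, checks $\vec{a}D=\vec{a}$, observes that $\vec{a}$ dominates the initial vector $\vec{a}_0=(1,0,0,\ldots)$ \emph{componentwise}, and then uses only the entrywise nonnegativity of $D$ to get $\vec{a}_0D^n\le\vec{a}D^n=\vec{a}$ componentwise for all $n$; summing $i\cdot(\vec{a})_i$ yields the uniform bound $2\rho/(1-\rho)^2$. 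That sidesteps your stochastic-monotonicity and coupling step entirely --- in particular the boundary case at $0$ that you rightly flag as the delicate point never arises, since pointwise domination of measures is preserved by any nonnegative kernel, monotone or not. What your route buys in exchange is the sharper constant $\rho/(1-\rho)$ (the exact stationary mean) and the stronger conclusion that the law of $\Psi_{q_3,q_1}(n)$ is stochastically dominated by $\pi$ for every $n$. Your verification of order-preservation at the reflecting state is sound: under your common driving variable the three update maps $x\mapsto\max(x-1,0)$, $x\mapsto x$, $x\mapsto x+1$ are each nondecreasing, so the coupling never crosses, and $\delta_0\preceq\pi$ together with $\pi D=\pi$ gives the claim. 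Either argument proves the lemma.
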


\begin{proof}
By definition, put
\begin{equation*}
\vec{a} := (2,2(\frac{q_1}{q_3}) ,2(\frac{q_1}{q_3})^2, \ldots ).
\end{equation*}
Clearly, $ \vec{a} D = \vec{a}$. Besides, each component of $\vec{a}$ is larger
than the corresponding component of $\vec{a}_0 = (1,0,0 \dots )$. It follows
that the components of $\vec{a} D^n$ are larger than the components of $\vec{a}_0 D^n$
for any $n$. Hence $\mathbf E \Psi_{q_3,q_1}(n)$
is bounded by the number
$$
2 \sum_{i=0}^{\infty} i (\frac{q_1}{q_3})^i < \infty.
$$
\end{proof}

\subsection{}
Let us fix an order $p$ on $\mathcal A$. Let $a,b \in L_e$, $a<b$ such that
$\{ a, b \}$ is an
interval with respect to $p$ (i.e. $a$ and $b$ are neighbours), and $w \in \mathcal A^n$.
Cross out from $w$ all letters except $a$ and $b$; by $w_{a,b}$ denote the remaining
word.

Consider the action of generalised RSK-algorithm on the word $w$.
Form a new word by writing letters $a$,$b$ in order of bumping them from the first row
of $\mathcal A$-tableau; if some letters $a$ and $b$ remain in the first row then we write
them in the end of this new word. We say that this new word is a {\it possible transformation}
of the word $w_{a,b}$ and denote it by $d_w(w_{a,b})$.

For $w=z_1 z_2 \dots z_n$ any subword of the form $z_k z_{k+1} \dots z_n$ is called a {\it suffix}.
For all suffixes (including the empty one) of the word $w_{a,b}$ we compute the differencies between
the number of letters $b$ and the number of letters $a$ in these suffixes. The
maximal among differencies is called the {\it result} of $w_{a,b}$ and is denoted by $\rho(w_{a,b})$.
If the difference between numbers of $b$ and $a$ is equal to $\rho(w_{a,b})$ in some suffix
then we say that this suffix is {\it maximal}.

It is easily shown that if we apply the generalised RSK-algorithm directly to the word $w_{a,b}$
then there will be exactly $\rho(w_{a,b})$ letters $b$ in the first row.

{\it Example}. Suppose $x_1<x_2<x_3$ and $w= x_2 x_1 x_3 x_2 x_1 x_2 x_3 x_3 x_2 x_3 x_1 x_3 x_2$.
Then $w_{x_2 x_3} = x_2 x_3 x_2 x_2 x_3 x_3 x_2 x_3 x_3 x_2$, the maximal suffix of $w_{x_2,x_3}$
consists of the last 6 letters, and $\rho( w_{x_2,x_3})=2$.
Also we have $$d_w(w_{x_2,x_3})= x_2 x_3 x_2 x_3 x_2 x_3 x_2 x_2 x_3 x_3.$$

\begin{lemma}
For all $w \in \mathcal A^n$ the following is true
\begin{equation*}
\rho (d_w(w_{a,b})) \leq \rho (w_{a,b}).
\end{equation*}
\end{lemma}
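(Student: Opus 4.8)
The plan is to follow, during the generalised RSK processing of $w$, only the letters $a$ and $b$ that currently sit in the first row, and to control the \emph{result} of the word we would obtain if we flushed those letters out at the present moment. First I would record the structure of the first row. Since $a,b\in L_e$, $a<b$, and $\{a,b\}$ is an interval, the $a$'s and $b$'s occupying the first row always form a contiguous block $a^{A}b^{B}$, with entries $<a$ to its left and entries $>b$ to its right. Reading $w$ from left to right, the only events that alter this block or emit a letter into $d_w(w_{a,b})$ are: (i) an incoming $b$, which appends a $b$, $B\to B+1$; (ii) an incoming $a$ with $B>0$, which evicts the leftmost $b$ (this $b$ is written) and becomes an $a$, so $(A,B)\to(A+1,B-1)$; (iii) an incoming $a$ with $B=0$, which enlarges the $a$-part, $A\to A+1$, emitting nothing; (iv) a letter $c<a$ that evicts the leftmost $a$ (possible only when $A>0$ and all smaller first-row entries are $\le c$), which writes an $a$, $A\to A-1$; and the analogous eviction of a $b$ by such a $c$ when $A=0$, which writes a $b$, $B\to B-1$. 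A letter $>b$, and the insertion of a $b$, never evict an $a$ or a $b$. Finally the surviving block $a^{A}b^{B}$ is written. The main obstacle is exactly this case analysis: one must check these are \emph{all} the ways an $a$ or a $b$ can leave the first row — in particular that a smaller letter may evict an $a$, or a $b$ when no $a$ is present, while larger letters and the insertion of a $b$ cannot — so that the operations above genuinely produce $d_w(w_{a,b})$.

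Next I introduce the potential. Write $\sigma$ for the word written so far and set
\[
\Phi := \rho\bigl(\sigma\, a^{A} b^{B}\bigr),
\]
the result of the word we would output if we flushed the current block immediately; at the end of the process $\Phi=\rho(d_w(w_{a,b}))$. I will compare $\Phi$ with $\rho(v)$, where $v$ is the subword of $a$'s and $b$'s of the already-read prefix of $w$, so that at the end $v=w_{a,b}$. Both comparisons rest on two elementary identities, immediate from the definition of $\rho$ as a maximal suffix excess,
\[
\rho(x b)=\rho(x)+1, \qquad \rho(x a)=\max\bigl(\rho(x)-1,\,0\bigr).
\]

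The claim is then the invariant $\Phi\le\rho(v)$, which I would verify operation by operation. The evictions by a letter $c<a$ (type (iv) and its $b$-analogue) and the final flush leave the word $\sigma\, a^{A} b^{B}$ literally unchanged: writing the evicted $a$ (resp.\ $b$), which is the front of the block, and deleting it from the front returns the same concatenation, so these change neither $\Phi$ nor $v$. Reading a $b$ (type (i)) appends a $b$ to $\sigma\, a^{A} b^{B}$ and a $b$ to $v$, raising both $\Phi$ and $\rho(v)$ by $1$. Reading an $a$ with $B=0$ (type (iii)) appends an $a$ to $\sigma\, a^{A} b^{B}$ and to $v$, sending each of $\Phi,\rho(v)$ to $\max(\,\cdot-1,0)$. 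The only subtle case is reading an $a$ with $B>0$ (type (ii)): the flushed word changes from $\sigma\, a^{A} b^{B}$ to $\sigma\, b\, a^{A+1} b^{B-1}$, but the two identities give that $\Phi$ drops by exactly $1$; since $B>0$ forces $\Phi\ge B\ge 1$, this equals the effect $\max(\Phi-1,0)$ of appending an $a$ to $v$. Thus in every case $\Phi$ moves in lockstep with $\rho(v)$, so the inequality (in fact equality) $\Phi\le\rho(v)$ is preserved. Starting from $\Phi=\rho(v)=0$ and reading $w$ to the end yields $\rho(d_w(w_{a,b}))\le\rho(w_{a,b})$, as desired.
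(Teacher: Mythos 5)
Your proof is correct, and it takes a genuinely different route from the paper's. The paper works combinatorially on the word $w_{a,b}$ itself: it greedily pairs each letter $a$ with a letter $b$ to its left, shows that the number of unpaired (``black'') letters $b$ equals $\rho(w_{a,b})$, and then proves by induction on the pairs that within each pair the $b$ is bumped from the first row before its partner $a$ (adopting the convention that paired $b$'s are bumped preferentially), so that only the black $b$'s can contribute to a suffix excess of $d_w(w_{a,b})$. You instead run a potential-function argument on the state of the first row, using that the $a$'s and $b$'s sitting there always form a block $a^A b^B$ (this is where the hypotheses $a,b\in L_e$ and $\{a,b\}$ an interval enter), and check event by event that $\Phi=\rho(\sigma a^A b^B)$ moves in lockstep with $\rho$ of the prefix of $w_{a,b}$ read so far; the only nontrivial case is an incoming $a$ with $B>0$, and your verification that $\Phi$ drops by exactly $1$ there (using $B>0\Rightarrow\Phi\ge B\ge 1$) is correct. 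The paper's pairing argument is shorter and more structural once the matching is set up; yours is more mechanical but self-contained, sidesteps the paper's tie-breaking convention about which of several equal letters is ``the'' one bumped, and actually establishes the stronger conclusion $\rho(d_w(w_{a,b}))=\rho(w_{a,b})$ (equality rather than $\le$), which is consistent with the paper's worked example but is not claimed there. Either argument suffices for the application in Section 3.1.
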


\begin{proof}

{\it Step 1}

We will arrange some letters of $w_{a,b}$ in pairs.
In each pair there will be one letter
$b$ and one letter $a$ which is to the right of this letter $b$.
To construct the first pair we take the right-most letter $a$ in $w_{a,b}$ and pair it
with the first letter $b$ to the left of this letter $a$.
To construct the $k$-th pair we take the $k$-th letter $a$
from the right and pair it with the first letter $b$ which is to the left
of our letter $a$ and have not
chosen yet. We will make this procedure as many times as possible.
We will call a letter $b$ 'white' if it is in a pair with some
letter $a$ and 'black' otherwise.

For the word $w_{x_2,x_3}$ from example (see above) the pairs will be the following:

\begin{picture}(180,30)
\put(0,0){$x_2$}
\put(15,0){$x_3$}
\put(30,0){$x_2$}
\put(45,0){$x_2$}
\put(60,0){$x_3$}
\put(75,0){$x_3$}
\put(90,0){$x_2$}
\put(105,0){$x_3$}
\put(120,0){$x_3$}
\put(135,0){$x_2$}
\qbezier(140,8)(132,20)(125,8)
\qbezier(95,8)(87,20)(80,8)
\qbezier(50,8)(35,20)(20,8)
\end{picture}

{\it Step 2}

Let us prove that exactly $\rho(w_{a,b})$ letters $b$ are not in pairs.
Indeed, denote by $\rho'$ the number of 'black' letters in $w_{a,b}$.
Consider the suffix beginning at the left-most 'black' letter.
It is clear that every letter $a$ from this suffix is paired with
a letter $b$ from this suffix. Therefore $\rho(w_{a,b}) \geq \rho'$.
On the other hand, there should be at least $\rho(w_{a,b})$
'black' letters in a maximal suffix. Thus $\rho'=\rho(w_{a,b})$.

Consider a step of RSK-algorithm when we should bump a letter $b$
from the first row. Let us assume that if there are a 'white' letter $b$
in the first row then we bump it; we bump 'black' letter $b$ only
if all letters $b$ in the first row are 'black'. Evidently, this agreement
does not affect the action of algorithm.

{\it Step 3}

We claim that in each pair the letter $b$ will be bumped earlier than the letter $a$.
Let us prove this by induction on the number of pairs. At first, consider the
left-most pair. Letter $b$ from this pair is the first 'white' letter $b$ in the
word therefore letter $a$ from this pair must bump it. Now consider the $k$-th pair
from the left. In the moment of appearance of letter $a$ from this pair all letters $b$
from the previous $k-1$ pairs must be already bumped (it follows from the inductive hypothesis).
Therefore the arriving letter $a$ must bump letter $b$ from its own pair (if it was not
bumped earlier which is also possible). Hence the order in each pair will be
the same after any possible transformation of the word $w_{a,b}$.

Consequently, only 'black' letters of $w_{a,b}$ can contribute to the result of $d_w( w_{a,b})$.
There are only $\rho(w_{a,b})$ 'black' letters in $w_{a,b}$ therfore $\rho( d_w( w_{a,b})) \leq \rho(w_{a,b})$.
\end{proof}

\section{ Proofs of theorems}

\subsection{ The proof of Theorem 2 for finite  $\mathcal A$}

Consider the special case when the numbers of $\alpha$- and $\beta$-parameters
are finite and $\gamma=0$. Then without loss of generality we can assume that $K$ is
equal to the number of $\alpha$-parameters and $L$ is equal to the number of
$\beta$-parameters. Thus our alphabet becomes
$\mathcal A = \{x_1, x_2, \ldots, x_K \} \cup \{y_1, y_2, \ldots, y_L \}$.

Introduce the linear order on $\mathcal A$:

\begin{equation*}
x_1 < x_2 < \dots <x_K < y_L < y_{L-1} < \dots < y_2<y_1.
\end{equation*}
By corollary of Lemma 1 it suffices to prove the theorem for this order.

We will apply the generalised RSK-algrothm to a word $w \in \mathcal A^n$. By $\xi_j^i(n)$
denote the number of letters $x_j$ in the $i$-th row of $\mathcal A_p$-tableau $R(w)$.
It is easy to see that for our order $\xi_j^i(n)=0$ if $i>j$.

A sequence of random variables $\psi(n)$ is called {\it L-bounded } if there is
a constant $C$ which does not depend on $n$ such that

\begin{equation*}
\mathbf E |\psi(n)| <C  \ \ \ \mbox{ for all } n.
\end{equation*}
By $\{ L(n) \}$ we denote any L-bounded sequence of random variables. Notice
that

\begin{equation*}
\{ L(n) \}+\{ L(n) \} =\{ L(n) \}, \ \ \ \{ L(n) \}- \{L(n) \}= \{ L(n) \}.
\end{equation*}

Let us prove the statement of Theorem 2 for rows. The proof is by induction
on the number of rows.

1)First row

{\it Lower bound}.
Note that all letters $x_1$ are in the first row. Therefore,

\begin{equation*}
\lambda_1^{\mathcal P}(n) \ge N_{x_1}(n).
\end{equation*}

{\it Upper bound}.
Consider the number $\xi_k^1(n)$, $k \geq 2$. It is increased by 1 if $x_k$ appears;
the probability of this event is $\alpha_k$. Suppose $\xi_k^1(n) \ne 0$;
if $x_{k-1}$ appears then $\xi_k^1(n)$ is decreased by 1 with probability
$\alpha_{k-1}>\alpha_k$. By Lemma 4 it follows that the sequence $\xi_k^1(n)$
is L-bounded. In each row we have at most one letter $y_j$ for any $j$.
Therefore,

\begin{equation*}
\lambda_1^{\mathcal P}(n)=N_{x_1}(n)+\{ L(n) \}.
\end{equation*}

2)Fix $l \leq K$. Assume that the theorem holds for the first
$l-1$ rows and prove it for the $l$-th row.

{\it Lower bound}. Letter $x_l$ can not be lower than the $l$-th row.
On the other hand, the number of letters $x_l$ in the first $l-1$ rows
is L-bounded by the inductive hypothesis. Hence,

\begin{equation*}
\lambda_l^{\mathcal P} (n) \geq N_{x_l} (n) - \{ L(n) \}.
\end{equation*}

{\it Upper bound}.
Let $w^i$ be the word consisting of letters which were bumped out
from the $i$-th row and let $w^0=w$.

We will prove that $\xi_k^l(n)$ is L-bounded for $k>l$.
Note that $w^{i-1}_{x_{k-1},x_k}$ is a sequence of letters $x_{k-1}$ and $x_k$
arriving at the $i$-th row and $w^i_{x_{k-1},x_k}$ is a sequence of
letters $x_{k-1}$,$x_k$ bumped from the $i$-th row. Thus $w^i_{x_{k-1},x_k}$
is a possible transformation of $w^{i-1}_{x_{k-1},x_k}$ except for letters $x_{k-1}$
and $x_k$ which remain in the $i$-th row. By the inductive hypothesis there is a
L-bounded number of such letters. Therefore we can apply Lemma 5 and get

\begin{equation}
\label{R<R}
\rho(w^i_{x_{k-1},x_k}) \le \rho(w^{i-1}_{x_{k-1},x_k}) + \{ L(n) \}, \ \ \ i=1 \dots l-1.
\end{equation}
We sum inequalities \eqref{R<R} for $i=1 \dots l-1$ and obtain

\begin{equation*}
\rho(w^{l-1}_{x_{k-1},x_k}) \le \rho(w^0_{x_{k-1},x_k}) + \{L(n)\}.
\end{equation*}
Moreover,
\begin{equation*}
\rho(w^0_{x_{k-1},x_k}) = \{L(n)\}.
\end{equation*}
by Lemma 4.
Arguing as in the proof of upper bound for the first row, we see that

\begin{equation*}
\xi_k^l(n) \le \rho(w^{l-1}_{x_{k-1},x_k}).
\end{equation*}
It follows that $\xi^l_k(n)$ is a L-bounded sequence for any $k>l$.
In each row we have at most one letter $y_j$ for all $j$. Therefore,

\begin{equation*}
\lambda_l^{\mathcal P} (n)= N_{x_l}(n) + \{L(n)\}.
\end{equation*}

For estimating $\lambda_1^{'\mathcal P}(n), \dots, \lambda_L^{'\mathcal P}(n)$
we consider the transposed map. We can apply bounds for rows to the measure
on $\mathbb Y_n$ defined by the parameters
$\mathcal P^{t} = (\{ \beta_j \},\{ \alpha_i \},\gamma)$.
Using Lemma 3, we get

\begin{align*}
& \lambda_1^{'\mathcal P}(n) = \lambda_1^{\mathcal P^t} (n)= N_{y_1}(n) + \{L(n)\},  \\
&  \vdots \\
& \lambda_L^{'\mathcal P}(n) = \lambda_L^{\mathcal P^t} (n)= N_{y_L}(n) + \{L(n)\}.
\end{align*}

\subsection{The proof of Theorem 2 for the general case }

Let $\mathcal P=(\{ \alpha_i \},\{ \beta_j \},\gamma)$ satisfy the assumption of
strict monotonicity \eqref{monot}. First let us prove the theorem for the lengths of rows.

By corollary of Lemma 1, upper and lower bounds for
$\mathbf E (\lambda_i^{\mathcal P} (n) - N_{x_i}(n))$
can be proved for different orders on $\mathcal A$.

Introduce the order $p_1$ on $\mathcal A$:

\begin{equation*}
x_1<x_2< \dots< y_1 < y_2 < \dots <G.
\end{equation*}

{\it Lower bound}.
For the order $p_1$ the evolution of letters $x_1,x_2, \dots, x_K$
does not depend on other letters, therefore inequality

\begin{equation*}
\lambda_i^{\mathcal P} (n) \ge N_{x_i} (n) + \{ L(n) \}
\end{equation*}
is proved as in the case of finite $\mathcal A$.

To prove the upper bound we will make a reduction of the general case
to the case of finite set of parameters by an operation of amalgamation.
Below we indicate sets of letters which we want to identify. We will
use a linear order on $\mathcal A$ such that these sets will be the intervals.
Our goal is to obtain a finite number of parameters in such a way that
the $K$ largest $\alpha$-parameters do not change and all parameters are distinct.

1)Assume that the number of $\alpha$-parameters in $\mathcal P$ is infinite.
Let us consider two cases

a) Suppose there exists $l \in \mathbb N$ such that:

\begin{equation}
\label{11}
\sum\nolimits_{i=l+1}^{\infty} \alpha_i < \alpha_K
\end{equation}
and for any $r \in \mathbb N$:

\begin{equation}
\label{12}
\sum\nolimits_{i=l+1}^{\infty} \alpha_i \ne \alpha_r.
\end{equation}
Then we identify letters $x_{l+1}, x_{l+2}, x_{l+3} \dots$.

b) If there is no $l$ satisfying \eqref{11},\eqref{12}
then it is easy to show that there exist
$l_1, m_1 \in \mathbb N$ such that

\begin{equation*}
\sum\nolimits_{i=l_1+1}^{\infty} \alpha_i = \alpha_r < \alpha_K \ \ \ \mbox{ for some } r \leq l_1
\end{equation*}
and the following is true

\begin{equation*}
\alpha_r > \sum_{i=l_1+1}^{l_1+m_1} \alpha_i > \alpha_{r+1},
\end{equation*}

\begin{equation*}
\sum\nolimits_{i=l_1+m_1+1}^{\infty} \alpha_i < \alpha_{l_1}.
\end{equation*}
In this case we identify $x_{l_1+1}, \dots, x_{l_1+m_1}$, and
(separately) $x_{l_1+m_1+1}$, $x_{l_1+m_1+2}$,$ \dots$.

There is a finite number of $\alpha$-parameters after these operations
of amalgamation. By $\alpha_R$ denote the minimal from them.

2) If there is an infinite number of $\beta$-parameters then we can choose
$l_2$ such that

\begin{equation}
\label{bet}
\sum\nolimits_{i=l_2+1}^{\infty} \beta_i < \alpha_R,
\end{equation}
and identify letters $y_{l_2+1}, y_{l_2+2}, y_{l_2+3} \dots$.
Note that after this operation of amalgamation a new $\alpha$-parameter arises.
By \eqref{bet} this parameter is less than the other $\alpha$-parameters.
Denote it by $\alpha_{R+1}$.

3) If $\gamma>0$ then we may choose
$m \in \mathbb N$ and $\delta_1>\delta_2> \dots > \delta_m \in \mathbb R$
such that
\begin{align*}
& \delta_1 + \delta_2 + \dots + \delta_m=0, \\
& \frac{\gamma}{m}+\delta_1 < \alpha_{R+1}, \\
& \frac{\gamma}{m} + \delta_m > 0.
\end{align*}
Divide $G$ into non-intersecting
intervals of lengths $\frac{\gamma}{m}+\delta_1$, $\frac{\gamma}{m}+\delta_2$, $\dots$,
$\frac{\gamma}{m}+\delta_m$ and identify points in these intervals.
We have $m$ new $\alpha$-parameters as a result of this operation.
It is clear that these parameters are pairwise distinct and are less than the previous ones.

Thus any set of parameters $\mathcal P$ can be reduced to a finite number
of parameters in such a way that the $K$ largest $\alpha$-parameters do not change.
Denote this finite set of parameters by $\mathcal P^{*}$. Recall that
$\lambda_i^{\mathcal P^{*}}(n)$ can be naturally defined on $(\mathcal A^n, \mu_n)$.

{\it Upper bound}.

1)First row.

By Lemma 2
\begin{equation*}
\lambda_1^{\mathcal P} (n) \le \lambda_1^{\mathcal P^*} (n).
\end{equation*}
There is only a finite number of parameters in
$\mathcal P^*$, therefore we can use the result of section 3.1

\begin{equation*}
\lambda_1^{\mathcal P^*} (n) \le N_{x_1} (n) + \{ L(n) \}.
\end{equation*}
Hence,
\begin{equation*}
\lambda_1^{\mathcal P}(n) \le N_{x_1} (n) + \{ L(n) \}.
\end{equation*}

2) Let us prove the theorem for the $l$-th row. By Lemma 2
\begin{equation*}
\lambda_1^{\mathcal P}(n) + \lambda_2^{\mathcal P}(n) + \dots + \lambda_l^{\mathcal P}(n) \le
\lambda_1^{\mathcal P^*}(n) + \lambda_2^{\mathcal P^*}(n) + \dots + \lambda_l^{\mathcal P^*}(n).
\end{equation*}
Using the results of section 3.1, we get
\begin{equation*}
\lambda_1^{\mathcal P^*}(n) + \lambda_2^{\mathcal P^*}(n) + \dots + \lambda_l^{\mathcal P^*}(n)
\le N_{x_1}(n) + \dots + N_{x_l} (n) + \{ L(n) \}.
\end{equation*}
From lower bound it follows
\begin{equation*}
\lambda_1^{\mathcal P}(n) + \lambda_2^{\mathcal P}(n) + \dots + \lambda_{l-1}^{\mathcal P}(n)
\ge N_{x_1} (n)+ \dots + N_{x_{l-1}}(n) + \{ L(n) \}.
\end{equation*}
Thus we have
\begin{equation*}
\lambda_l^{\mathcal P} (n) \le N_{x_l} (n) + \{L(n) \}.
\end{equation*}

We can derive the statement of the theorem for columns from the statement
of the theorem for rows by the same way as in the section 3.1. This completes
the proof of Theorem 2.

\subsection{The proof of Theorem 1}

By an easy computation of characteristic functions it can be shown that

\begin{multline*}
\eta_n := \Bigl(\frac{N_{x_1}(n) - \alpha_1 n}{\sqrt{n}}, \frac{N_{x_2}(n) - \alpha_2 n}{\sqrt{n}}, \dots,
\frac{N_{x_K}(n) - \alpha_K n}{\sqrt{n}}, \\ \frac{N_{y_1}(n) - \beta_1 n}{\sqrt{n}}, \dots, \frac{N_{y_L}(n) - \beta_L n}{\sqrt{n}} \Bigr)
\xrightarrow[Law]{} \eta  ,
\end{multline*}
where $\eta$ stands for a multidimensional random Gaussian variable with
zero mean and covariance matrix $C$ which is defined by the formula

\begin{equation*}
C=\begin{pmatrix}
\alpha_{1}-\alpha_{1}^2 & -\alpha_{1} \alpha_{2} & -\alpha_{1} \alpha_{3} & \ldots & -\alpha_{1} \alpha_{K} & -\alpha_{1} \beta_{1} & \ldots & -\alpha_{1} \beta_{L} \\
-\alpha_{2} \alpha_{1} & \alpha_{2}-\alpha_{2}^2 & -\alpha_{2} \alpha_{3} & \ldots & -\alpha_{2} \alpha_{K} & -\alpha_{2} \beta_{1} & \ldots & -\alpha_{2} \beta_{L} \\
\vdots & \vdots & \ddots & \vdots & \vdots  & \vdots & \vdots & \vdots  \\
-\alpha_{K} \alpha_{1} & -\alpha_{K} \alpha_{2} & \ldots & \ldots & \alpha_{K}-\alpha_{K}^2 & -\alpha_{K} \beta_{1} & \ldots & -\alpha_{K} \beta_{L} \\
-\beta_{1} \alpha_{1} & -\beta_{1} \alpha_{2} & \ldots & \ldots & -\beta_1 \alpha_{K} & \beta_{1} - \beta_1^2 & \ldots & -\beta_{1} \beta_{L} \\
\vdots & \vdots & \vdots & \vdots & \vdots  & \vdots & \ddots & \vdots  \\
-\beta_{L} \alpha_{1} & -\beta_{L} \alpha_{2} & \ldots & \ldots & -\beta_L \alpha_{K} & \ - \beta_1 \beta_L & \ldots & \beta_L-\beta_{L}^2 \\
\end{pmatrix}
\end{equation*}

It is easy to see that
\begin{equation*}
\psi_n := \frac{( \{ L(n) \}, \{ L(n) \}, \dots, \{ L(n) \} )}{\sqrt{n}} \xrightarrow[prob]{} 0
\end{equation*}
for any L-bounded sequences of random variables (denoted as $\{L(n) \}$).
It is well-known (see, e.g., \cite[Th 3.1]{bil}) that
$\eta_n^0 \xrightarrow[Law]{} \eta^0$ and
$\psi_n^0 \xrightarrow[prob]{} 0$ implies
\begin{equation*}
\eta_n^0 + \psi_n^0 \xrightarrow[Law]{} \eta^0.
\end{equation*}
Hence we obtain
\begin{multline*}
\Bigl(\frac{\lambda_1^{\mathcal P}(n) - \alpha_1 n}{\sqrt{n}}, \frac{\lambda_2^{\mathcal P}(n) - \alpha_2 n}{\sqrt{n}}, \dots,
\frac{\lambda_K^{\mathcal P}(n) - \alpha_K n}{\sqrt{n}}, \\ \frac{\lambda_1^{' \mathcal P}(n) - \beta_1 n}{\sqrt{n}}
\dots \frac{\lambda_L^{'\mathcal P}(n) - \beta_L n}{\sqrt{n}} \Bigr) = \eta_n + \psi_n \xrightarrow[Law]{} \eta.
\end{multline*}
This completes the proof of Theorem 1.

\end{document}